\newtheorem{thm}{Theorem}[section]
\newtheorem{lem}{Lemma}[section]
\newtheorem{pro}{Proposition}[section]
\theoremstyle{definition}
\def\-{\mbox{--}}
\begin{document}
\title{\Large\bf Rainbow connection number\\[1mm] and the number
of blocks\footnote{Supported by NSFC No.11071130.}}
\author{\small  Xueliang~Li, Sujuan~Liu\\
\small Center for Combinatorics and LPMC-TJKLC\\
\small Nankai University, Tianjin 300071, China\\
\small lxl@nankai.edu.cn; sjliu0529@126.com}
\date{}
\maketitle
\begin{abstract}
An edge-colored graph $G$ is {\em rainbow connected} if every pair
of vertices of $G$ are connected by a path whose edges have distinct
colors. The {\em rainbow connection number} $rc(G)$ of $G$ is
defined to be the minimum integer $t$ such that there exists an
edge-coloring of $G$ with $t$ colors that makes $G$ rainbow
connected. For a graph $G$ without any cut vertex, i.e., a
2-connected graph, of order $n$, it was proved that $rc(G)\leq
\lceil\frac n 2 \rceil$ and the bound is tight. In this paper, we
prove that for a connected graph $G$ of order $n$ with cut vertices,
$rc(G) \leq\frac{n+r-1} 2$, where $r$ is the number of blocks of $G$
with even orders, and the upper bound is tight. Moreover, we also
obtain a tight upper bound for a bridgeless graph, i.e., a
2-edge-connected graph.

{\flushleft\bf Keywords}: rainbow edge-coloring, rainbow connection
number, cut vertex, block decomposition.

{\flushleft\bf AMS subject classification 2010}: 05C40, 05C15.

\end{abstract}

\section{Introduction}

All graphs considered in this paper are simple, finite and
undirected. For notation and terminology not defined here, we refer
to \cite{bondy2008graph}. In an edge-colored graph $G$, a path is
called a {\em rainbow path} if the colors of its edges are distinct.
The graph $G$ is called {\em rainbow connected} if every pair of
vertices are connected by at least one rainbow path in $G$. An
edge-coloring of a connected graph $G$ that makes $G$ rainbow
connected is called {\em a rainbow edge-coloring} of $G$. The
minimum number of colors required to rainbow color $G$ is called the
{\em rainbow connection number} of $G$, denoted by $rc(G)$. If a
graph $G$ has an edge-coloring $c$ and $G^\prime$ is a subgraph of
$G$, $c(G^\prime)$ denotes the set of colors appeared in $G^\prime$.
An edge-coloring using $k$ colors is addressed as a
$k$-edge-coloring. If $P$ is a path, the length of $P$ is denoted by
$\ell(P)$.

Let $G^\prime$ be a subgraph of a graph $G$. An {\em ear} of
$G^\prime$ in $G$ is a nontrivial path in $G$ whose end vertices lie
in $G^\prime$ but whose internal vertices are not. An {\em ear
decomposition} of a 2-connected graph $G$ is a sequence of subgraphs
$G_0, G_1, \cdots, G_k$ of $G$ satisfying that (1) $G_0$ is a cycle
of $G$; (2) $G_i=G_{i-1}\bigcup P_{i} \ (1\leq i\leq k)$, where
$P_{i}$ is an ear of $G_{i-1}$ in $G$; (3) $G_{i-1}(1\leq i\leq k)$
is a proper subgraph of $G_i$; (4) $G_k=G$. If $\ell(P_1)\geq
\ell(P_2)\geq\cdots \geq \ell(P_k)$, we say that the ear
decomposition is nonincreasing. From the above definition, every
graph $G_i$ in an ear decomposition is 2-connected.

A $block$ of a graph $G$ is a maximal connected subgraph of $G$ that
does not have any cut vertex. So every block of a nontrivial
connected graph is either a $K_2$ or a 2-connected subgraph. All the
blocks of a graph $G$ form a {\em block decomposition} of $G$. A
block $B$ is called an {\it even (odd) block} if the order of $B$ is
even (odd).

Let $c$ be a rainbow $k$-edge-coloring of a connected graph $G$. If a
rainbow path $P$ in $G$ has length $k$, we call $P$ $a$ $complete$
$rainbow$ $path$; otherwise, it is $an \ incomplete$ $rainbow$
$path$. A rainbow edge-coloring $c$ of $G$ is $incomplete$ if for any
vertex $u\in V(G)$, $G$ has at most one vertex $v$ such that all the
rainbow paths between $u$ and $v$ are complete; otherwise, it is
$complete$.

The definition of a rainbow coloring was introduced by Chartrand et
al. in \cite{Chartrand2008graph}. For more knowledge, we refer to
\cite{LSS2012connected, Li2012connected}. In
\cite{chakraborty2009hardness}, it was proved that computing the
rainbow connection number of a graph is $NP$-hard. Hence, tight
upper bounds of the rainbow connection number for a connected graph
have been a subject of investigation. The authors of
\cite{Chandran2011dominating} proved that
$rc(G)\leq3n/(\delta+1)+3$, where $\delta$ is the minimum degree of
the connected graph $G$, and the authors of \cite{bcrr, dongli}
obtained some upper bound of the rainbow connection number in term
of radius and bridges. For 2-connected graphs, there exist the
following results.

\begin{lem} \cite{Li2011connected} \label{lem1} Let $G$ be a
Hamiltonian graph of order $n \ (n\geq 3)$. Then $G$ has an
incomplete $\lceil\frac{n}{2}\rceil$-rainbow coloring, i.e.,
$rc(G)\leq\lceil\frac{n}{2}\rceil$.
\end{lem}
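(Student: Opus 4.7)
My plan is to use the Hamilton cycle directly to construct a $\lceil n/2 \rceil$-edge-coloring that is rainbow connected, and then verify the ``incomplete'' condition by a simple parity argument. First I would fix a Hamilton cycle $C = v_1 v_2 \cdots v_n v_1$ of $G$, set $k = \lceil n/2 \rceil$, and color the cycle edge $e_i = v_i v_{i+1}$ with color $((i-1) \bmod k) + 1$, so that the colors cycle through $1, 2, \ldots, k$ around $C$ with period $k$. Each chord of $C$ is then assigned an arbitrary color from $\{1, \ldots, k\}$; chords will play no role in the analysis but must be colored to complete the edge-coloring of $G$.

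To verify the rainbow property, I would take any two vertices $v_a, v_b$ and consider the two arcs of $C$ joining them. One of these arcs, call it $P$, has length $\ell(P) \leq \lfloor n/2 \rfloor \leq k$. The colors appearing on $P$ are $\ell(P)$ consecutive values modulo $k$, hence pairwise distinct, so $P$ is a rainbow path. This already gives $rc(G) \leq k = \lceil n/2 \rceil$.

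For incompleteness I would split on the parity of $n$. If $n$ is odd, then $\lfloor n/2 \rfloor = k - 1 < k$, so for every pair $(v_a, v_b)$ the shorter arc on $C$ is a rainbow path of length strictly less than the number of colors, i.e., an incomplete rainbow path; in particular, no ``bad'' vertex exists for any $u$. If $n$ is even, the shorter arc has length strictly less than $k$ whenever $v_a$ and $v_b$ are non-antipodal on $C$, again yielding an incomplete rainbow path; only when $v_b$ is the antipode of $v_a$ on $C$ do both arcs have length exactly $k$, producing two complete rainbow paths of length $k$. Since each vertex has a unique antipode on $C$, at most one vertex $v$ is ``bad'' for each $u$, so the coloring is incomplete as required.

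I expect the only mild obstacle to be the even case, where I must carefully confirm that the antipode is unique, that both arcs really do have length exactly $k$, and that every non-antipodal pair is connected by a strictly shorter (hence incomplete) rainbow arc. All of these facts follow immediately from the periodicity of the cycle coloring, but this is the single place where the argument is sensitive to parity.
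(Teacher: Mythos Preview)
Your argument is correct. The periodic coloring of the Hamilton cycle with period $k=\lceil n/2\rceil$ makes every arc of length at most $k$ rainbow, the shorter arc between any two vertices has length at most $\lfloor n/2\rfloor\le k$, and your parity split cleanly handles incompleteness: in the odd case every shorter arc has length $k-1<k$, while in the even case only the unique antipode can force both arcs to have length exactly $k$. Assigning arbitrary colors to chords is harmless since you never rely on them.

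As for comparison with the paper: Lemma~\ref{lem1} is quoted from \cite{Li2011connected} and is not proved in the present paper, so there is no in-paper argument to compare against directly. That said, the exact periodic cycle coloring you use is precisely the one the authors employ when they color the initial odd cycle $G_0$ in Case~1 of the proof of Lemma~\ref{lem3} (colors $x_1,\dots,x_{k+1}$ repeated around a $(2k+1)$-cycle), so your approach is fully in line with the techniques of the paper and with the standard proof of this lemma.
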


\begin{lem}\cite{Li2011connected} \label{lem2} Let $G$ be a
$2$-connected non-Hamiltonian graph of order $n \ (n\geq 4)$.
If $G$ has at most one ear with length 2 in a nonincreasing
ear decomposition, then $G$ has a incomplete $\lceil\frac{n}
{2}\rceil$-rainbow coloring, i.e., $rc(G)\leq\lceil\frac{n}{2}\rceil$.
\end{lem}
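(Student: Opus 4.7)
The plan is to proceed by induction on the number $k$ of ears in a nonincreasing ear decomposition $G_0, G_1, \ldots, G_k = G$. Since $G$ is non-Hamiltonian we have $k \geq 1$ and $\ell(P_1) \geq 2$. For the base case $k = 1$, I would apply Lemma~\ref{lem1} to obtain an incomplete $\lceil |V(G_0)|/2 \rceil$-rainbow coloring of the Hamiltonian cycle $G_0$, and then extend it across $P_1$ by assigning fresh colors to the first $\lceil \ell(P_1)/2 \rceil$ edges of $P_1$ and reusing them symmetrically on the remaining edges, so that every internal vertex of $P_1$ has two rainbow exits to the endpoints. A parity check confirms the total is at most $\lceil n/2 \rceil$.

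For the inductive step I would split according to $\ell(P_k)$. If $\ell(P_k) = 1$, then $P_k$ is a chord, $G_{k-1}$ still satisfies the hypothesis, and the induction hypothesis gives a coloring on $G_{k-1}$; I then color $P_k$ with any pre-existing color. If $\ell(P_k) \geq 3$, I would color $P_k = u x_1 \cdots x_{\ell-1} v$ using $\lceil \ell(P_k)/2 \rceil$ new colors on the first half and recycling them on the second half. Rainbow paths from an internal $x_j$ to any $w \in V(G_{k-1})$ are built by walking along $P_k$ to the nearer endpoint and then continuing along the inductively provided rainbow path in $G_{k-1}$; since the new and inherited palettes are disjoint, the concatenation is rainbow. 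A straightforward parity bookkeeping then shows the total color count is at most $\lceil |V(G_k)|/2 \rceil$.

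The main obstacle is the case $\ell(P_k) = 2$. The single new vertex $x$ is attached via two edges $ux$ and $xv$, and when $|V(G_{k-1})|$ is odd the color budget does not grow at all, so $c(ux)$ and $c(xv)$ must both come from the existing palette. The assumption of at most one length-$2$ ear, together with the nonincreasing order, guarantees that this tight situation arises only for $P_k$ and that every earlier ear has length $\geq 3$. My plan here is to exploit the incompleteness of the inductive coloring on $G_{k-1}$: for each vertex $w$ there is at most one ``complete-only partner'' with respect to $u$ and at most one with respect to $v$, so I can choose $c(ux)$ and $c(xv)$ from the existing palette in such a way that for every $w \in V(G_{k-1})$ at least one of the two detours $x \to u \to w$ or $x \to v \to w$ stays rainbow. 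A parallel argument, applied symmetrically, shows that the extended coloring is still incomplete. I expect this length-$2$ step to be the principal source of case-analysis difficulty, because the rigid color budget leaves almost no room while one must simultaneously maintain rainbow connectivity and the incomplete property across all pairs involving $x$.
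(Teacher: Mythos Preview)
The paper does not prove Lemma~\ref{lem2} at all: it is quoted verbatim from \cite{Li2011connected} and used as a black box (together with Lemma~\ref{lem1} and Theorem~\ref{thm:2con}) inside the proof of Lemma~\ref{lem3}. There is therefore no ``paper's own proof'' to compare your proposal against; the relevant argument lives in the cited reference, not in this paper.

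On the merits of your sketch itself: the inductive scheme along a nonincreasing ear decomposition is the natural one, and your identification of the $\ell(P_k)=2$ step as the crux is correct. Two points deserve more care before this becomes a proof. First, the color budget: adding an ear of length $\ell$ contributes $\ell-1$ new vertices, so the allowed increment in the number of colors is $\lceil (|V(G_{i-1})|+\ell-1)/2\rceil - \lceil |V(G_{i-1})|/2\rceil$, which depends on the parity of $|V(G_{i-1})|$ and can be strictly smaller than $\lceil \ell/2\rceil$; in particular, for an odd-length ear you must recycle one old color on the middle edge (as the paper does in the proof of Lemma~\ref{lem3}) rather than use $\lceil \ell/2\rceil$ fresh ones. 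Second, your induction hypothesis is the \emph{incomplete} coloring property, so you must verify that incompleteness is preserved after each ear is attached --- not only in the $\ell(P_k)=2$ case --- and you must also cover the possibility that $G_{k-1}$ is Hamiltonian (in which case Lemma~\ref{lem1}, not the inductive hypothesis, supplies the incomplete coloring).
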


\begin{thm}\cite{Li2011connected, ekstein} \label{thm:2con}
Let $G$ be a $2$-connected graph of order $n \ (n\geq 3)$.
Then $rc(G)\leq\lceil\frac{n}{2}\rceil$, and the upper bound is
tight for $n\geq 4$.
\end{thm}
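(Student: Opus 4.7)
The plan is to exhaust the structural possibilities using the two preceding lemmas, dispatch the one remaining case by an inductive extension of an incomplete coloring, and finally show that cycles already meet the bound.

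First I would stratify the possibilities. If $G$ is Hamiltonian, Lemma~\ref{lem1} yields $rc(G)\le\lceil n/2\rceil$ at once. Otherwise fix a nonincreasing ear decomposition $G_0,G_1,\ldots,G_k$ of $G$ and let $t$ be the number of indices $i$ with $\ell(P_i)=2$. If $t\le 1$ then Lemma~\ref{lem2} finishes the job. The only case left is that $G$ is $2$-connected, non-Hamiltonian, and every nonincreasing ear decomposition of $G$ contains at least two ears of length $2$.

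For this residual case I would peel off the trailing length-$2$ ears. Let $s$ be the largest index with $\ell(P_s)\ge 3$ and set $m=k-s\ge 2$; then $G_s$ is $2$-connected of order $n-m$, and its ear decomposition $G_0,\ldots,G_s$ contains no ear of length $2$. By Lemma~\ref{lem1} or Lemma~\ref{lem2}, $G_s$ admits an \emph{incomplete} rainbow $\lceil(n-m)/2\rceil$-edge-coloring $c_s$. I would then extend $c_s$ to $G$ by coloring the two edges of each added ear $P_{s+1},\ldots,P_k$ in turn, spending at most $\lceil n/2\rceil-\lceil(n-m)/2\rceil$ additional colors overall, which amounts to roughly one fresh color for every two added midpoints.

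The main obstacle is this extension step. When we add a midpoint $u$ with neighbors $x,y$ in the already-colored subgraph $H$, we must preserve rainbow connectivity from $u$ to every $v\in V(H)$ and between $u$ and previously added midpoints. The incompleteness of the current coloring supplies, for each vertex $v$, at least two targets admitting an incomplete rainbow path to $v$; this slack lets us choose colors on $xu$ and $uy$ that keep the coloring both rainbow and incomplete. To stay within budget, the length-$2$ ears should be processed in pairs: for each pair $(P_i,P_j)$ one introduces a single new color, assigns it to one edge at each of the two midpoints, and reuses old colors (picked via the incompleteness condition) on the remaining two edges; the rainbow path between the two new midpoints then passes through $H$ using only old colors together with the single shared new one. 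Special attention is needed when $n$ is even, where the budget forbids any surplus new color and every fresh color must be shared across a pair of midpoints.

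Tightness for $n\ge 4$ is witnessed by the cycle $C_n$: a pigeonhole argument on the two arcs joining a pair of (near-)antipodal vertices shows that fewer than $\lceil n/2\rceil$ colors must force a color repetition on both arcs for some such pair, giving $rc(C_n)\ge\lceil n/2\rceil$ and matching the upper bound.
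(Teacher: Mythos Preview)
The paper does not prove Theorem~\ref{thm:2con}. It is imported from \cite{Li2011connected, ekstein} together with Lemmas~\ref{lem1} and~\ref{lem2}, and is then \emph{used} as a black box (e.g.\ in Subcase~2.2 of the proof of Lemma~\ref{lem3} and throughout the proof of Theorem~\ref{them1}). So there is no ``paper's own proof'' to compare your attempt against.

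On the merits of your sketch: the case split is the natural one, and the first two cases do follow immediately from Lemmas~\ref{lem1} and~\ref{lem2}. The residual case, however, is not carried through, and the outline has real gaps. First, your count $m=k-s$ includes trailing ears of length~$1$ as well as length~$2$; only the latter add vertices, so $|V(G_s)|=n-m$ is not correct in general (length-$1$ ears can of course be ignored, but this should be said). Second, and more seriously, your pairing scheme spends $\lceil m/2\rceil$ new colors on $m$ midpoints, whereas the budget $\lceil n/2\rceil-\lceil(n-m)/2\rceil$ can equal $\lfloor m/2\rfloor$ (take $n$ even and $m$ odd), so you are one color short precisely in the parity case you flag but do not resolve. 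Third, you need the coloring to remain \emph{incomplete} after each pair is absorbed, and every previously added midpoint must stay rainbow-connected to the two new ones; invoking incompleteness for a single fixed pair of endpoints does not establish this. None of these obstacles is insurmountable---the cited papers do follow essentially this route---but a complete argument requires substantially more care than the proposal provides. The tightness claim via $rc(C_n)=\lceil n/2\rceil$ is correct.
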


\begin{pro}\cite{caro2008rainbow} \label{pro1}
If $G$ is a connected bridgeless (2-edge-connected) graph with $n$
vertices, then $rc(G)\leq4n/5-1$.
\end{pro}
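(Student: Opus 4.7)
The plan is to prove the bound inductively via a closed ear decomposition $G_0, G_1, \ldots, G_k$, which every 2-edge-connected graph admits. Here $G_0$ is a cycle, and each $P_i$ (for $i \geq 1$) is an ear of $G_{i-1}$ in $G$; one should allow $P_i$ to share only a single endpoint with $G_{i-1}$ (so $P_i$ attaches as a cycle) in the case that $G$ is 2-edge-connected but not 2-connected. Order the ears so that $\ell(P_1) \geq \cdots \geq \ell(P_k)$.

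The base step colors $G_0$ optimally with $\lceil |V(G_0)|/2 \rceil$ colors, which fits the $4n/5-1$ budget as soon as $|V(G_0)|\geq 5$; the very small cycles can be treated by hand. The inductive step processes one ear at a time. When ear $P_i$ of length $\ell$ is added, it brings in $\ell-1$ new internal vertices, and the edges of $P_i$ need $\ell$ distinct colors for $P_i$ itself to be rainbow. Long ears ($\ell\geq 5$) sit comfortably inside the $4/5$ budget even when all their edges receive fresh colors; the obstruction is short ears of length $2$, $3$, or $4$, which individually overshoot the budget if colored naively.

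The core idea is therefore to \emph{reuse} already-used colors on some edges of each short ear. To guarantee that this does not destroy rainbow connection between the new internal vertices of $P_i$ and distant vertices of $G_{i-1}$, I would maintain, as an inductive invariant, a suitable strengthening of the \emph{incomplete} rainbow-coloring property introduced in Lemmas~\ref{lem1} and~\ref{lem2}: for every old vertex $u$, a sufficient supply of rainbow $u$--$v$ paths omit enough colors to absorb later recolorings on subsequent short ears without collisions.

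The hardest step, I expect, is the combinatorial bookkeeping when several short ears accumulate along the same portion of $G_{i-1}$, since each reuse of an old color can consume slack at many old vertices simultaneously; handling this cleanly requires a careful case analysis on the positions of the attachment points of each $P_i$ and on whether its length is $2$, $3$, or $4$. Once the invariant is maintained throughout, summing the fresh colors used on the base cycle and on all ears yields the amortized bound $rc(G)\leq 4n/5-1$, with the $-1$ term coming either from a saving on the base cycle (for instance when it has odd length) or from the final ear processed.
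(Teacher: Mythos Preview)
The paper does not give its own proof of Proposition~\ref{pro1}; the result is simply quoted from Caro, Lev, Roditty, Tuza and Yuster as background, so there is no in-paper argument to compare your proposal against directly. What the paper does instead is supersede this proposition with the sharper Theorem~\ref{thm2}, and that proof proceeds by a completely different route: it takes the block decomposition of the 2-edge-connected graph, applies Theorem~\ref{them1} (itself built on Lemma~\ref{lem3}, a statement about a single 2-connected block of odd order with a designated vertex), bounds the number $r$ of even blocks by $(n-1)/3$ since each even block has at least four vertices, and reads off $rc(G)\le (n+r-1)/2\le (2n-2)/3$. Ears appear only inside the proof of Lemma~\ref{lem3}, for one 2-connected block at a time; the paper never runs an ear decomposition on the whole 2-edge-connected graph.

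Your sketch, by contrast, works directly with a closed ear decomposition of $G$ and tries to amortize color usage over short ears by reusing old colors. That is a different line of attack and closer in spirit to what one would expect from the original Caro et al.\ paper than to anything here. As written, however, it is not a proof: the key invariant (``a suitable strengthening of the incomplete rainbow-coloring property'') is never actually stated, and you explicitly defer the case analysis for ears of length $2$, $3$, $4$ on which the whole amortization hinges. Without a precise invariant one cannot check that reusing an old color on a short ear preserves rainbow connectivity between the new internal vertices and all old vertices, nor that the invariant survives to the next step; the remark about where the $-1$ comes from is likewise speculative. So the approach is plausible as an outline, but the hard part is missing.
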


In this paper, we will study the rainbow connection number of a
connected graph with cut vertices and obtain a tight upper bound.
Besides, a tight upper bound for a 2-edge-connected (bridgeless)
graph is also obtained.

\section{Main results}

We first show that every 2-connected graph $G$ with odd number of
vertices has a rainbow edge-coloring with a nice property.

\begin{lem}\label{lem3}
Let $G$ be a 2-connected graph of order $n \ (n\geq3)$ and $v_0$ be
any vertex of $G$. If $n$ is odd, then $G$ has a rainbow
$\lceil\frac n 2\rceil$-edge-coloring $c$ such that there exists a
color $x$ of the edge-coloring satisfying that every vertex of $G$
can be connected by a rainbow path $P$ to $v_0$ with $x\notin c(P)$.
\end{lem}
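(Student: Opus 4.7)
The plan is to construct the coloring explicitly, patterning on the odd cycle case and extending via a nonincreasing ear decomposition, in the spirit of Lemmas~\ref{lem1} and~\ref{lem2}. For the base case $G = C_{2k+1}$ with vertices $v_0, v_1, \ldots, v_{2k}$ around the cycle, I would color the consecutive edges by the pattern $1, 2, \ldots, k, k+1, 1, 2, \ldots, k$. This uses exactly $\lceil n/2\rceil = k+1$ colors, with the color $k+1$ appearing only on the edge $v_k v_{k+1}$. For every $v_i$, the shorter arc from $v_0$ has length at most $k$ and carries colors forming a prefix of $\{1, \ldots, k\}$, so setting $x := k+1$ gives the desired avoidance property on the cycle. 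For a Hamiltonian $2$-connected $G$ with odd $n$, I would apply this coloring to a Hamiltonian cycle through $v_0$ and color every remaining edge arbitrarily in $\{1, \ldots, k+1\}$; the Hamiltonian cycle alone already yields rainbow connection of $G$, and the shorter-arc paths from $v_0$ continue to witness the avoidance of $x$.

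For a non-Hamiltonian $2$-connected $G$ of odd order $n$, I would induct on the number of ears in a nonincreasing ear decomposition $G_0, G_1, \ldots, G_m = G$ chosen so that $v_0 \in V(G_0)$, paralleling the construction behind Lemma~\ref{lem2}. The starting cycle $G_0$ is colored by the odd-cycle pattern if $|V(G_0)|$ is odd, and otherwise by an analogous even-cycle pattern with the reserved color postponed until a suitable ear is processed. When attaching an ear $P = u_0 u_1 \cdots u_\ell$ to $G_i$, the endpoints $u_0$ and $u_\ell$ each already possess, by the induction hypothesis, a rainbow $v_0$-path in $G_i$ avoiding $x$. I would split $P$ at an interior vertex and color its two halves so that each new internal vertex of $P$ is reached from $v_0$ by concatenating one of these existing rainbow paths with a rainbow prefix of $P$, choosing colors on $P$ that are consistent with the palette $\{1, \ldots, k+1\}$ and disjoint from those used on the corresponding existing path; the color $x$ is deliberately placed on the far half of $P$ so that no newly added internal vertex actually needs $x$ on its route to $v_0$.

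The main obstacle is verifying, at each inductive step, that the new ear coloring simultaneously (a) keeps every pair of vertices in $G_{i+1}$ rainbow connected, (b) lets every new internal vertex of $P$ reach $v_0$ by a rainbow path avoiding $x$, and (c) uses no colors outside $\{1, \ldots, k+1\}$. The tightest configurations are those where $|V(G_0)|$ is even and several ears of length $2$ appear early, since the color budget is then quite constrained and the parity of the growing graph is delicate; as with Lemma~\ref{lem2}, these cases likely require an initial refinement of the ear decomposition (restricting the number of length-$2$ ears) together with a dedicated argument for the remaining exceptional configurations.
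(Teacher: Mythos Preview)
Your outline matches the paper's approach exactly in the easy case: when every ear (after $G_0$) has odd length, the paper colors the odd cycle $G_0=v_0\cdots v_{2k}v_0$ by $1,\ldots,k,k{+}1,1,\ldots,k$, takes $x=k{+}1$, and for each odd ear of length $2s{+}1$ introduces $s$ fresh colors plus one recycled old color, maintaining the $x$-avoidance property just as you describe. So far, so good.

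The genuine gap is your treatment of even-length ears (equivalently, of an even starting cycle). You propose to keep $x$ fixed from the outset and ``postpone'' or ``place $x$ on the far half of $P$'', but you never specify a coloring of an even ear that simultaneously (a) stays within the $\lceil n/2\rceil$ budget, (b) preserves rainbow connectedness, and (c) preserves $x$-avoidance from $v_0$. The paper does \emph{not} try to carry a fixed $x$ across an even ear. Instead it locates the \emph{last} even ear $P_t$ in the nonincreasing decomposition and uses Lemmas~\ref{lem1}--\ref{lem2} to give $G_{t-1}$ an \emph{incomplete} rainbow $\lceil |V(G_{t-1})|/2\rceil$-coloring: then $v_0$ has an incomplete rainbow path $P'$ to at least one endpoint of $P_t$, missing some old color $x'$. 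Writing $P_t=v'_0\cdots v'_{2s}$, the paper colors $P_t$ by $x_1,\ldots,x_s,\,x',\,x_1,\ldots,x_{s-1}$ with $x_1,\ldots,x_s$ fresh, and takes the avoidable color to be the \emph{new} color $x:=x_s$. This choice makes $x$-avoidance in $G_{t-1}$ automatic (the color is absent there), and on $P_t$ the vertex $v'_s$ reaches $v_0$ via $v'_sP_tv'_{2s}P'v_0$, which uses $x'$ but not $x_s$. All later ears are odd, so your Case~1 mechanism finishes. The subcase where $\ell(P_t)=2$ and at least two earlier ears also have length~$2$ is handled separately by coloring all length-$2$ ears with two fresh colors $x_1,x_2$ and taking $x=x_2$. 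In short, the missing idea is that $x$ should be a \emph{fresh} color introduced at the last even ear, leveraging the ``incomplete'' property of Lemmas~\ref{lem1}--\ref{lem2}, rather than a color reserved on $G_0$ and nursed through the induction.
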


\begin{proof}
Since $G$ is 2-connected, $G$ has a nonincreasing ear decomposition
$G_0, G_1,$ $\cdots,$ $G_q(=G) \ (q\geq0)$ satisfying that (1) $G_0$
is a cycle with $v_0\in V(G_0)$; (2) $G_i=G_{i-1}\bigcup P_i$, where
$P_i \ (1\leq i\leq q)$ is an ear of $G_{i-1}$ in $G$; (3)
$\ell(P_1)\geq\ell(P_2)\geq\cdots\geq\ell(P_q)$. We consider the
following two cases.

\noindent {\bf Case 1.} No ear of $P_1,\cdots, P_q$ has an even
length.

In this case, since $G$ has an odd order, $G_0$ must be an odd
cycle. Assume that $G_0=v_0v_1\cdots v_{2k}v_{2k+1}(=v_0)$ with
$k\geq1$. Define a $(k+1)$-edge-coloring $c_0$ of $G_0$ by
$c_0(v_{i-1}v_i)=x_i$ for $i$ with $1\leq i\leq k+1$ and
$c_0(v_{i-1}v_i)=x_{i-k-1}$ for $i$ with $k+2\leq i\leq 2k+1$. It
can be checked that $c_0$ is a rainbow $\lceil\frac {|V(G_0)|}
2\rceil$-edge-coloring of $G_0$ such that every vertex of $G_0$ can
be connected by a rainbow path $P$ in $G_0$ to $v_0$ with
$x_{k+1}\notin c_0(P)$. If $G_0=G$, the conclusion holds.

Now assume that $G_0\neq G$ and $P_1=v_0^\prime v_1^\prime\cdots
v_{2s}^\prime v_{2s+1}^\prime(s\geq0)$ with $V(G_0)\bigcap
V(P_1)=\{v_0^\prime, v_{2s+1}^\prime\}$. Define an edge-coloring
$c_1$ of $G_1=G_0\bigcup P_1$ by $c_1(e)=c_0(e)$ for $e\in E(G_0)$,
$c_1(v_{i-1}^\prime v_i^\prime)=y_i$ for $i$ with $1\leq i\leq s$,
$c_1(v_s^\prime v_{s+1}^\prime) =x^\prime$ and $c_1(v_{i-1}^\prime
v_i^\prime)=y_{i-s-1}$ for $i$ with $s+2\leq i\leq 2s+1$, where
$y_1, \cdots, y_s$ are new colors and $x^\prime$ is a color that
already appeared in $G_0$. Here, if $\ell(P_1)=1$, i.e., $s=0$, we
just color the only edge $v_0^\prime v_1^\prime$ of $P_1$ by a color
that appeared in $G_0$. It can be checked that $c_1$ is a rainbow
$\lceil\frac {|V(G_1)|} 2\rceil$-edge-coloring of $G_1$. From the
definition of $c_1$, every vertex of $G_0$ can be connected by a
rainbow path $P$ in $G_0$ to $v_0$ with $x_{k+1}\notin c_1(P)$. Let
$P^\prime$ and $P^{\prime\prime}$ be the rainbow paths,
respectively, from $v_0^\prime$ and $v_{2s+1}^\prime$ to $v_0$ in
$G_0$ such that $x_{k+1}\notin c_1(P^\prime)$ and $x_{k+1}\notin
c_1(P^{\prime\prime})$. For any vertex $v_j^\prime \ (1\leq j\leq
s)$, $v_j^\prime P_1v_0^\prime P^\prime v_0$ is a rainbow path in
$G_1$ from $v_j^\prime$ to $v_0$ such that $x_{k+1}\notin
c_1(v_j^\prime P_1v_0^\prime P^\prime v_0)$. For any vertex
$v_j^\prime \ (s+1\leq j\leq 2s)$, we can choose $v_j^\prime
P_1v_{2s+1}^\prime P^{\prime\prime}v_0$ as a rainbow path in $G_1$
from $v_j^\prime$ to $v_0$ such that $x_{k+1}\notin c_1(v_j^\prime
P_1v_{2s+1}^\prime P^{\prime\prime}v_0)$. Hence, $c_1$ is a required
rainbow edge-coloring of $G_1$.

If $G_1=G$, the conclusion holds. Otherwise, repeating the above
process of defining $c_1$ from $c_0$, we can obtain a rainbow
$\lceil\frac {|V(G_i)|} 2\rceil$-edge-coloring of $G_i \ (2\leq
i\leq q)$ such that every vertex of $G_i$ can be connected by a
rainbow path $P$ in $G_i$ to $v_0$ with $x_{k+1} \notin c_i(P)$.
Therefore, $c_q$ is a required rainbow $\lceil\frac n
2\rceil$-edge-coloring of $G$.

\noindent {\bf Case 2.} At least one of $P_1,\cdots, P_q$ has an
even length.

Suppose that $P_t \ (1\leq t\leq q)$ is the last added ear with an
even length. So $P_{t+1}, \cdots, P_s$ have odd lengths. From Case
1, we just need to show that $G_t$ has a required rainbow
$\lceil\frac{|V(G_t)|} 2\rceil$-edge-coloring. Now we will consider
the following two cases:

\noindent {\bf Subcase 2.1.} At most one of the ears $P_1, \cdots,
P_{t-1}$ has length 2.

Assume that $P_t=v_0^\prime v_1^\prime\cdots v_{2s-1}^\prime
v_{2s}^\prime$ such that $V(P_t)\bigcap V(G_{t-1})=\{v_0^\prime,
v_{2s}^\prime\}$. It is obvious that $G_0, G_1, \cdots, G_{t-1}$ is
a nonincreasing ear decomposition of $G_{t-1}$ with at most one ear
with length 2. From Lemmas \ref{lem1} and \ref{lem2}, $G_{t-1}$ has
an incomplete rainbow $\lceil\frac {|V(G_{t-1})|}
2\rceil$-edge-coloring $c_{t-1}$. In $G_{t-1}$, there exists an
incomplete rainbow path $P^\prime$ from $v_0$ to one of $v_0^\prime$
and $v_{2s}^\prime$ (say $v_{2s}^\prime$). Assume that $x^\prime$ is
a color of the coloring $c_{t-1}$ with $x^\prime\notin
c_{t-1}(P^\prime)$. Define an edge-coloring $c_t$ of
$G_t=G_{t-1}\bigcup P_t$ by $c_t(e)=c_{t-1}(e)$ for $e\in
E(G_{t-1})$, $c_t(v_{i-1}^\prime v_i^\prime)=x_i$ for $i$ with
$1\leq i\leq s$, $c_t(v_s^\prime v_{s+1}^\prime)=x^\prime$ and
$c_t(v_{i-1}^\prime v_i^\prime)=x_{i-s-1}$ for $i$ with $s+2\leq
i\leq 2s$, where $x_1, \cdots, x_s$ are new colors. It can be
checked that $c_t$ is a rainbow $\lceil\frac {|V(G_t)|}
2\rceil$-edge-coloring of $G_t$. From the definition of coloring
$c_t$, every vertex of $G_{t-1}$ has a rainbow path $P$ in $G_{t-1}$
to $v_0$ with $x_s\notin c_t(P)$. Let $P^{\prime\prime}$ be a
rainbow path in $G_{t-1}$ from $v_0^\prime$ to $v_0$. For any vertex
$v_j^\prime \ (1\leq j\leq s-1)$, $v_j^\prime P_tv_0^\prime
P^{\prime\prime}v_0$ is a rainbow path in $G_t$ from $v_j^\prime$ to
$v_0$ such that $x_s\notin c_t(v_j^\prime P_tv_0^\prime
P^{\prime\prime}v_0)$. For any vertex $v_j^\prime \ (s\leq j\leq
2s-1)$, we have $v_j^\prime P_tv_{2s}^\prime P^\prime v_0$ is a
rainbow path in $G_t$ from $v_j^\prime$ to $v_0$ such that
$x_s\notin c_t(v_j^\prime P_tv_{2s}^\prime P^\prime v_0)$. So every
vertex of $G_t$ has a rainbow path $P$ in $G_t$ to $v_0$ with
$x_s\notin c_t(P)$. Hence, $c_t$ is a required rainbow edge-coloring
of $G_t$.

\noindent {\bf Subcase 2.2.} At least two ears of $P_1, \cdots,
P_{t-1}$ have length 2.

In this case, it is obvious that $\ell(P_t)=2$ and
$\ell(P_{t+1})=\cdots =\ell(P_q)=1$. Assume that
$\ell(P_1)\geq\cdots \geq\ell(P_h)\geq3$ and $\ell(P_{h+1})=\cdots
=\ell(P_t)=2$. Here at least three ears have length 2, i.e.,
$t-h\geq3$. From Theorem \ref{thm:2con}, $G_h$ has a rainbow
$\lceil\frac {|V(G_h)|} 2\rceil$-edge-coloring $c_h$. Assume that
$P_j=a_jv_jb_j \ (h+1\leq j\leq t)$ such that $V(P_j)\bigcap
V(G_h)=\{a_j, b_j\}$. Define an edge-coloring $c_t$ of $G_t$ by
$c_t(e)=c_h(e)$ for $e\in E(G_h)$, $c_t(a_jv_j)=x_1$ for $j$ with
$h+1\leq j\leq t$ and $c_t(v_jb_j)=x_2$ for $j$ with $h+1\leq j\leq
t$, where $x_1, x_2$ are new colors. It is easy to check that $c_t$
is a rainbow edge-coloring of $G_t$ with at most $\lceil\frac
{|V(G_t)|} 2\rceil$ colors and every vertex of $G_t$ has a rainbow
path $P$ to $v_0$ with $x_2\notin c_t(P)$. Therefore, $G_t$ has a
required rainbow $\lceil\frac{|V(G_t)|} 2\rceil$-edge-coloring.
\end{proof}

\begin{thm}\label{them1}
Let $G$ be a connected graph of order $n \ (n\geq3)$ and $G$ has a
block decomposition $B_1,\cdots, B_q \ (q\geq2)$, where $r$ blocks
are even blocks and the others are odd ones. Then
$rc(G)\leq\frac{n+r-1}{2}$ and the upper bound is tight.
\end{thm}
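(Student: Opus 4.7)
The plan is to prove the upper bound $rc(G)\leq\frac{n+r-1}{2}$ by induction on the number of blocks $q$, using Lemma~\ref{lem3} as the main tool, and to exhibit a chain-of-cycles construction for tightness. The base case is $q=2$ and the inductive step treats $q\geq 3$.

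For the base case $q=2$, let $v$ denote the cut vertex shared by $B_1$ and $B_2$. I would split into three subcases according to parities. If both blocks are even, each $B_i$ is rainbow colored with $n_i/2$ fresh colors using Theorem~\ref{thm:2con}, totalling $\frac{n+1}{2}=\frac{n+r-1}{2}$. If exactly one block is odd, say $B_1$, I apply Lemma~\ref{lem3} to $B_1$ with $v_0=v$ to obtain a palette $C_1$ with avoidable color $x_1$, color $B_2$ with $n_2/2$ fresh colors, and identify $x_1$ with one color of $B_2$; the total is $n/2$. If both blocks are odd, I apply Lemma~\ref{lem3} to each with $v_0=v$, producing avoidable colors $x_1\in C_1$ and $x_2\in C_2$; since $|C_i|\geq 2$, I can simultaneously identify $x_1$ with a non-avoidable color of $C_2$ and $x_2$ with a non-avoidable color of $C_1$, achieving $C_1\cap C_2=\{x_1,x_2\}$ and a total of $\frac{n-1}{2}$ colors.

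For the inductive step $q\geq 3$, I would pick any leaf block $B_q$ of the block tree, let $v$ be its cut vertex, and set $G'=G-(V(B_q)\setminus\{v\})$. Since $G'$ has $q-1\geq 2$ blocks, the inductive hypothesis provides a rainbow coloring of $G'$ using at most $\frac{n'+r'-1}{2}$ colors, where $n'=n-n_q+1$ and $r'$ is adjusted for whether $B_q$ is even. I extend the coloring to $G$ by using $n_q/2$ fresh colors on $B_q$ via Theorem~\ref{thm:2con} (if $B_q$ is even), or by applying Lemma~\ref{lem3} to $B_q$ with $v_0=v$ and identifying the avoidable color with any color already used in $G'$ (if $B_q$ is odd). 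A direct calculation shows the total is $\frac{n+r-1}{2}$ in both cases.

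Rainbow connectedness throughout follows from the avoidable-color property: a path crossing from $B_q$ to $G'$ decomposes into a $B_q$-segment that avoids the shared color, concatenated with a rainbow $G'$-segment whose other colors are fresh relative to $B_q$. For tightness, I would use a chain of cycles joined consecutively at single cut vertices, whose diameter already equals $\frac{n+r-1}{2}$. The main technical obstacle is the $q=2$ both-odd subcase: the naive ``share one color'' approach yields $\frac{n+1}{2}$ colors, one more than the bound, so this case forces the simultaneous two-color identification between $C_1$ and $C_2$ described above. Verifying that this double identification preserves rainbow connectedness hinges on the fact that the $B_i$-segment of any cross path avoids $x_i$, so the only two colors shared between the palettes are precisely those being avoided on the relevant segments.
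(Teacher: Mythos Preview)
Your approach is correct and essentially matches the paper's: both proceed block by block, applying Lemma~\ref{lem3} to each odd block to obtain an avoidable color that is then identified with a color already in use, and both resolve the all-odd base case via the same simultaneous two-color identification between $B_1$ and $B_2$. The paper presents this as an iterative construction (choosing an even block as $B_1$ when $r\geq 1$ to streamline the case analysis) rather than an explicit induction on $q$, and its tightness example uses $K_2$'s for the even blocks rather than even cycles, but these are cosmetic differences.
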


\begin{proof}
Let $G$ be a connected graph of order $n$ with $q \ (q\geq2)$ blocks
in its block decomposition. If $G$ has at least one even block, we
choose $G_1=B_1$ being an even block of $G$; otherwise, $G_1=B_1$
being an odd block of $G$. Since $q\geq2$ and $G$ is connected, $G$
has a block $B_2$ such that $V(G_1)\bigcap V(B_2)=\{v_1\}$. Let
$G_2=G_1\bigcup B_2$. So $G_2$ is a connected graph which consists
of two blocks $B_1, B_2$. Repeating the process of adding $B_2$ to
$G_1$, we obtain a sequence of subgraphs $G_1, G_2,\cdots, G_q$ such
that $G_i \ (1\leq i\leq q)$ is a connected graph and
$G_i=B_1\bigcup B_2\bigcup \cdots\bigcup B_i \ (2\leq i\leq q)$ with
$V(G_{i-1})\bigcap V(B_i)=\{v_{i-1}\}$ for $i$ with $2\leq i\leq q$.
Denote the order of $B_i \ (1\leq i\leq q)$ by $n_i$. From Theorem
\ref{thm:2con} and $rc(K_2)=1$, every block $B$ has a rainbow
$\lceil\frac {|V(B)|} 2\rceil$-edge-coloring. We will consider the
following two cases.

\noindent {\bf Case 1.} $r\geq1$.

From the definition of $G_1$, $G_1=B_1$ is an even block and $G_1$
has a rainbow $\lfloor\frac{n_1} 2\rfloor$-edge-coloring $c_1$. If
$B_2$ is an even block, color the edges of $B_2$ with
$\lfloor\frac{n_2} 2\rfloor$ new colors such that $B_2$ is rainbow
connected. It is obvious that $G_2$ is rainbow connected and the
obtained edge-coloring $c_2$ of $G_2$ uses $\lfloor\frac{n_1}
2\rfloor+ \lfloor\frac{n_2} 2\rfloor$ colors. Consider the case that
$B_2$ is an odd block. From Lemma \ref{lem3}, $B_2$ has a rainbow
edge-coloring $c_2^\prime$ with $\lceil\frac{n_2} 2\rceil$ new
colors such that there exists a color $x^\prime$ of $c_2^\prime$
satisfying that every vertex of $B_2$ has a rainbow path $P$ in
$B_2$ to $v_1$ with $x^\prime\notin c_2^\prime(P)$. Replacing the
color $x^\prime$ of $c_2^\prime$ by a color $x$ that already
appeared in $G_1$, we obtain an edge-coloring $c_2$ of $G_2$ with
$\lfloor\frac{n_1} 2\rfloor+\lfloor\frac{n_2} 2\rfloor$ colors. It
is obvious that $G_1$ and $B_2$ are rainbow connected, respectively.
Consider two vertices $v^\prime\in V(G_1)$ and $v^{\prime\prime}\in
V(B_2)$. From the definition of $c_2$, there are two rainbow paths
$P^\prime$ in $G_1$ from $v^\prime$ to $v_1$ and $P^{\prime\prime}$
in $B_2$ from $v_1$ to $v^{\prime\prime}$ such that $x\notin
c_2(P^{\prime\prime})$. So $v^\prime P^\prime
v_1P^{\prime\prime}v^{\prime\prime}$ is a rainbow path from
$v^\prime$ to $v^{\prime\prime}$ in $G_2$. Hence, $c_2$ is a rainbow
edge-coloring of $G_2$ with $\lfloor\frac{n_1}
2\rfloor+\lfloor\frac{n_2} 2\rfloor$ colors.

If $q\geq3$, we can repeat the process of defining $c_2$ from $c_1$ to
obtain a rainbow edge-coloring $c_q$ of $G_q(=G)$ with $\lfloor\frac{n_1}
2\rfloor+\lfloor\frac{n_2} 2\rfloor+\cdots+\lfloor\frac {n_q} 2\rfloor$ colors.

\noindent {\bf Case 2.} $r=0$.

In this case, $G_2=B_1\bigcup B_2$ consists of two odd blocks. From
Lemma \ref{lem3}, $B_i \ (i=1, 2)$ has a rainbow $\lceil\frac {n_i}
2\rceil$-edge-coloring $c_i^\prime$ such that $x_i^\prime$ is a
color of $c_i^\prime$ satisfying that every vertex of $B_i \ (i=1,
2)$ has a rainbow path $P$ in $B_i$ to $v_1$ with $x_i^\prime\notin
c_i^\prime(P)$. Note that $c_1^\prime (B_1)\bigcap
c_2^\prime(B_2)=\emptyset$. Assume that $x_i \ (i=1, 2)$ is a color
of $c_i^\prime$ such that $x_i\neq x_i^\prime$. Replacing
$x_1^\prime$ by $x_2$ in $B_1$ and $x_2^\prime$ by $x_1$ in $B_2$,
we obtain an edge-coloring $c_2$ of $G_2$ with $\lfloor\frac{n_1}
2\rfloor+\lfloor\frac{n_2} 2\rfloor$ colors. It is obvious that $B_i
\ (i=1, 2)$ is rainbow connected. Consider two vertices $v^\prime\in
V(B_1)$ and $v^{\prime\prime}\in V(B_2)$. From the definition of
$c_2$, there exist two rainbow paths $P^\prime$ in $B_1$ from
$v^\prime$ to $v_1$ and $P^{\prime\prime}$ in $B_2$ from $v_1$ to
$v^{\prime\prime}$ such that $x_2\notin c_2(P^\prime)$ and
$x_1\notin c_2(P^{\prime\prime})$. So $v^\prime P^\prime
v_1P^{\prime\prime}v^{\prime\prime}$ is a rainbow path in $G_2$ from
$v^\prime$ to $v^{\prime\prime}$. Hence, $c_2$ is a rainbow
edge-coloring of $G_2$ with $\lfloor\frac{n_1} 2\rfloor+
\lfloor\frac{n_2} 2\rfloor$ colors. If $q\geq3$, we can color the
blocks $B_3,\cdots, B_q$ similar to Case 1 to obtain a rainbow
edge-coloring of $G$ with $\lfloor\frac{n_1}
2\rfloor+\lfloor\frac{n_2} 2\rfloor+\cdots+\lfloor\frac {n_q}
2\rfloor$ colors.

Therefore, in any case we have that $rc(G)\leq\lfloor\frac{n_1}
2\rfloor+\lfloor\frac{n_2} 2\rfloor+\cdots+\lfloor\frac {n_q}
2\rfloor=\frac{n+r-1} 2$.

\begin{figure}[h,t,b,p]
\begin{center}
\scalebox{0.45}[0.45]{\includegraphics{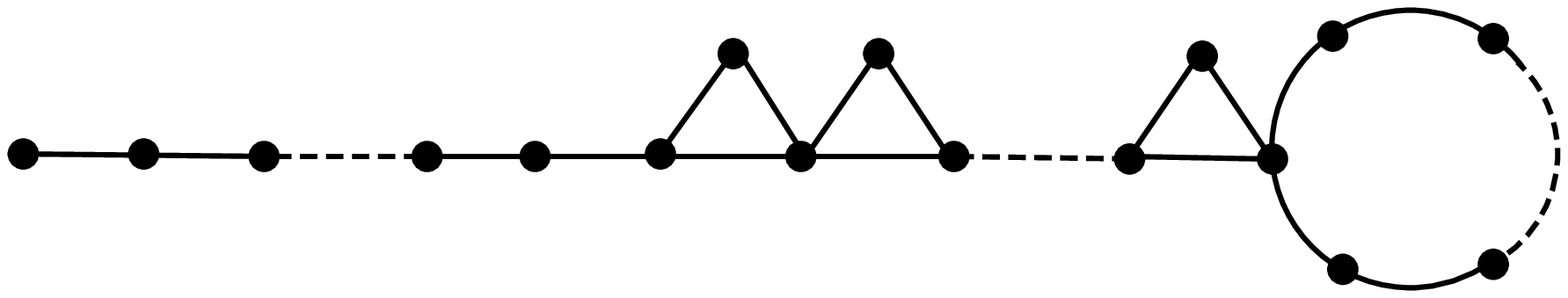}}

Figure 1. Graphs for the tightness of Theorem \ref{them1}.
\end{center}
\end{figure}

In order to prove that the upper bound is tight, we will show that
for any integers $n, r, q$, if there exist graphs of order $n$ with
$r$ even blocks and $q-r$ odd blocks, then one of such graphs has a
rainbow connection number $\frac{n+r-1} 2$.

In fact, if there exists a connected graph of order $n$ with $r$ even blocks, then $n+r$
must be an odd number. The graph $G$ of order $n$ in Figure 1
consists of $r$ even blocks $K_2$, $q-r-1$ odd cycles $K_3$ and
one odd cycle $C_{n-2q+r+2}$. Since $d(G)=\frac{n+r-1} 2$ and $d(G)\leq
rc(G)\leq \frac{n+r-1} 2$, we have $rc(G)=\frac{n+r-1} 2$.
\end{proof}

In the following, we give a tight upper bound of the rainbow
connection number for a 2-edge-connected graph which improves the
result of Proposition \ref{pro1}.

\begin{thm}\label{thm2}
Let $G$ be a 2-edge-connected graph of order $n \ (n\geq3)$. Then
$$rc(G)\leq\left\{
\begin{array}{ll}
2k   & $if $ n=3k+1 $or $ 3k+2 \\
2k+1 & $if $ n=3k+3
\end{array},
\right.$$

and the upper bound is tight.
\end{thm}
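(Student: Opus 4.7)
The plan is to reduce Theorem \ref{thm2} to Theorem \ref{them1} by exploiting the block structure of a bridgeless graph. Since $G$ has no bridge, no block of $G$ is a $K_2$, hence every block is $2$-connected and has order at least $3$.

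Assume first that $G$ has $q \geq 2$ blocks $B_1, \ldots, B_q$ of orders $n_1, \ldots, n_q$, and let $r$ denote the number of even blocks. Theorem \ref{them1} gives $rc(G) \leq (n + r - 1)/2$, so the task reduces to bounding $r$ sharply. Using the standard identity $\sum_{i=1}^{q} n_i = n + q - 1$ together with the lower bounds $n_i \geq 4$ on each even block and $n_i \geq 3$ on each odd block, I would derive
\[
n + q - 1 \;=\; \sum_{i=1}^{q} n_i \;\geq\; 4r + 3(q - r) \;=\; 3q + r,
\]
so that $r \leq n - 2q - 1$. Combined with $q \geq r$, this yields $r \leq \lfloor (n-1)/3 \rfloor$. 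A parity observation then tightens the estimate: since $\sum n_i = n + q - 1$ has the same parity as the number $q - r$ of odd blocks, one must have $n + r \equiv 1 \pmod{2}$. Intersecting $r \leq (n-1)/3$ with this parity constraint pins down the maximum feasible $r$ in each residue class, namely $r_{\max} = k$ when $n = 3k + 1$ or $n = 3k + 3$, and $r_{\max} = k - 1$ when $n = 3k + 2$. Plugging back into $(n + r - 1)/2$ delivers exactly the bounds $2k$, $2k$, and $2k + 1$ claimed by the theorem.

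Next I would dispatch the remaining case that $G$ is itself $2$-connected. Here Theorem \ref{thm:2con} gives $rc(G) \leq \lceil n/2 \rceil$, which lies within the desired bound in each residue class save for a small finite list of orders that can be verified directly. For tightness I would exhibit a concrete construction in each residue class: for $n = 3k+1$ a chain of $k$ copies of $C_4$ joined sequentially at single cut vertices has diameter $2k$, so the lower bound $rc(G) \geq \mathrm{diam}(G) = 2k$ matches the upper bound; for $n = 3k+2$ (respectively $n = 3k+3$) replace one $C_4$ in the chain by a $C_5$ (respectively append a single $C_3$ at one end of the chain).

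The main obstacle lies in the parity bookkeeping. The vertex-count bound $r \leq (n-1)/3$ alone gives $(n+r-1)/2$ only to within a half-integer in the residue $n = 3k + 2$, and it is precisely the constraint $n + r \equiv 1 \pmod{2}$ that shaves off the extra unit and closes the bound at the integer $2k$. A secondary subtlety is that Theorem \ref{them1} formally requires $q \geq 2$, so the 2-connected case must be treated separately via Theorem \ref{thm:2con}.
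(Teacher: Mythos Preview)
Your overall strategy matches the paper's: bound $r$ in terms of $n$ using the block structure, then feed this into Theorem~\ref{them1}. The paper's version is shorter: from $r\le (n-1)/3$ it writes $rc(G)\le (n+r-1)/2\le (2n-2)/3$, and since $rc(G)$ is an integer this gives $rc(G)\le\lfloor(2n-2)/3\rfloor$, which is exactly the stated piecewise bound. Your parity computation ($n+r\equiv 1\pmod 2$, then determining $r_{\max}$ in each residue class) is correct but unnecessary---the floor already does the work. Your tightness examples (chains of small cycles with diameter equal to the bound) are the same in spirit as the paper's.

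You are right, and more careful than the paper, to isolate the case $q=1$: Theorem~\ref{them1} is stated only for $q\ge 2$, so a $2$-connected $G$ genuinely needs Theorem~\ref{thm:2con} instead. However, your remark that $\lceil n/2\rceil$ ``lies within the desired bound \dots\ save for a small finite list of orders that can be verified directly'' conceals a real gap. For $n=5$ the claimed bound is $2k=2$, yet $C_5$ is $2$-edge-connected with $rc(C_5)=3$: with only two colours every rainbow path has length at most $2$, so every length-$2$ path of $C_5$ would have to be rainbow, forcing a proper $2$-edge-colouring of an odd cycle, which is impossible. This case therefore cannot be ``verified directly'' in your favour; the statement as written requires the caveat $n\neq 5$. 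The paper's own proof glosses over the same point by invoking Theorem~\ref{them1} without checking $q\ge 2$; for odd $n$ with $q=1$ that would assert $rc(G)\le (n-1)/2$, which $C_5$ already violates.
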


\begin{proof}
Suppose that $G$ has the block decomposition $B_1, B_2, \cdots,
B_q$. Since $G$ is 2-edge-connected, we have $|B_i|\geq3, 1\leq
i\leq q$. And if $B_i$ is an even block, then $|B_i|\geq4$. If $G$
has $r$ even blocks, then $3r+1\leq n$, i.e., $r\leq\frac{n-1} 3$.
From Theorem \ref{them1}, $rc(G)\leq\frac{n+r-1} 2\leq\frac{2n-2} 3$.
Since $rc(G)$ is an integer,
$rc(G)\leq\left\{
\begin{array}{ll}
2k   & $if $ n=3k+1$ or $ 3k+2 \\
2k+1 & $if $ n=3k+3
\end{array}.
\right.$

\begin{figure}[h,t,b,p]
\begin{center}
\scalebox{0.45}[0.45]{\includegraphics{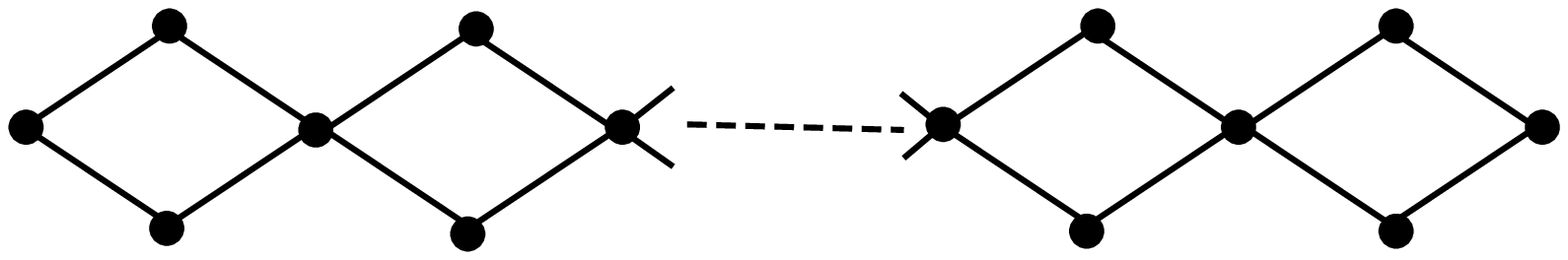}}

$G_1$

\scalebox{0.45}[0.45]{\includegraphics{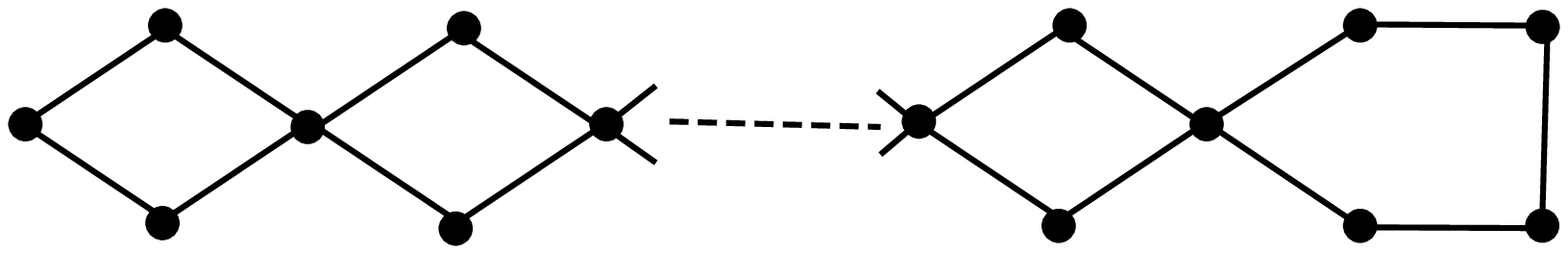}}

$G_2$

\scalebox{0.45}[0.45]{\includegraphics{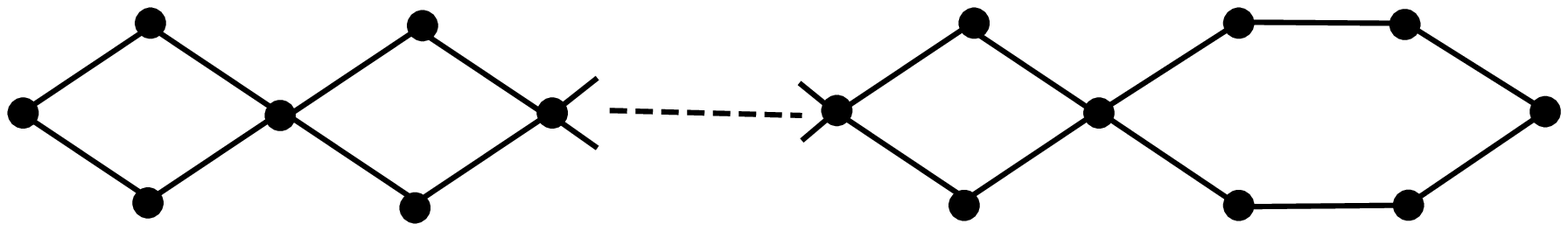}}

$G_3$

Figure 2. Graphs for the tightness of Theorem \ref{thm2}.
\end{center}
\end{figure}

The three graphs $G_1, G_2, G_3$ in Figure 2 are 2-edge-connected.
The order of $G_i \ (i=1, 2, 3)$ is $3k+i$, and $d(G_1)=d(G_2)=2k$
and $d(G_3)=2k+1$. From the above result and $d(G)\leq rc(G)$, we
have that $rc(G_1)=rc(G_2)=2k$ and $rc(G_3)=2k+1$. Hence, the upper
bound is tight.
\end{proof}

\end{document}